\NewDocumentCommand \tensor {O{}m} {\boldsymbol{#1\mathscr{\MakeUppercase{#2}}}} 
\newcommand{\bb}[1]{\mathbb{#1}}
\newcommand{\LineRef}[1]{\hyperref[#1]{Line~\ref{#1}}}
\crefname{hypothesis}{Hypothesis}{Hypotheses}
\title{Randomized dual singular value decomposition}
\author{Mengyu Wang\thanks{College of Mathematics and Statistics, Chongqing University, Chongqing 401331, P.R. China
  (\email{995647966@qq.com;\ zhoujc@cqu.edu.cn;\ lihy.hy@gmail.com or hyli@cqu.edu.cn}).}
\and Jingchun Zhou\footnotemark[1] \and Hanyu Li\footnotemark[1]}
\begin{document}
\begin{sloppypar}
\maketitle

\begin{abstract}
We first propose a concise singular value decomposition of dual matrices. Then, the randomized version of the decomposition is presented. It can significantly reduce the computational cost while maintaining the similar accuracy. We analyze the theoretical properties and illuminate the numerical performance of the randomized algorithm. 
\end{abstract}

\begin{keywords}
Dual matrices, SVD, Randomized algorithm
\end{keywords}

\begin{MSCcodes}
15A23, 15B33, 65F55,68W20
\end{MSCcodes}

\section{Introduction}
\label{sec:int}
Dual numbers were first introduced by \cite{clifford1871preliminary} as part of his extensive research on Clifford algebras. They take the form $q = q_s + q_i \epsilon$, where $q_s$ and $q_i$ are real or complex numbers, and $\epsilon$ is an infinitesimal unit satisfying $\epsilon^2 = 0$.  
Accordingly, we call $\mathbf{A} = \mathbf{A}_s + \mathbf{A}_i\epsilon$ with $\mathbf{A}_s, \mathbf{A}_i \in \mathbb{C}^{m \times n}$ a dual complex matrix. The set of this kind of matrices 
is denoted as 
$\mathbb{D}\mathbb{C}^{m \times n}$. 
Dual numbers and dual matrices have found applications in various fields such as 
robotics \cite{pradeep1989use,gu1987dual}, classical mechanics \cite{angeles1998application,pennestri2009linear}, and computer vision \cite{penunuri2019dual}. Moreover, they are also linked with screw theory \cite{fischer1998dual}. 
For decompositions of dual matrices, 
some authors discussed singular value decomposition (SVD) \cite{qi2022low,wei2024singular}, QR decomposition \cite{xu2024qr,pennestri2007linear}, UTV decomposition \cite{xu2024utv} and rank decomposition \cite{wang2023dual}. In this paper, we foucus on SVD of dual matrices.

As mentioned above, several scholars have contributed to the development and application of SVD for dual matrices. Specifically, Pennestrì et al. \cite{pennestri2018moore} obtained the SVD of dual real full column rank matrices by solving the linear systems involving Kronecker products.  Gutin \cite{gutin2022generalizations} developed two other SVDs applicable to square dual real matrices.  Qi et al. \cite{qi2022low} demonstrated that the dual complex matrix can be reduced to a diagonal non-negative dual matrix by unitary transformation. 
Wei et al. \cite{wei2024singular} introduced a compact dual SVD (CDSVD), which has more advantages compared with the above ones. 
However, CDSVD configures the singular value matrix as a dual matrix, which results in a complicated algorithm. For clarity, we simplify the matrix as a real matrix and propose a concise CDSVD (CCDSVD).

Considering that SVD faces challenges when dealing with large-scale matrices and randomized methods can significantly reduce the computational cost, we further present the randomized version of CCDSVD. 
Note that randomized SVD of real or complex matrices has been investigated extensively; see e.g., \cite{halko2011finding,gu2015subspace,martinsson2016randomized,yu2018efficient}. 

The rest of this paper is organized as follows. In Section 2, we propose the CCDSVD. Its randomized version is given in Section 3. In Section 4, we present
the error analysis. Section 5 is devoted to experimental
results. Finally, the whole paper is concluded with some remarks. 

\section{CCDSVD}
\label{sec:scdsvd}
With the technique in \cite{xu2024utv} for UTV decomposition of dual matrices, 
we can obtain CCDSVD in the following theorem. 

\begin{theorem}
    Let \(\mathbf{A} = {\bf A}_s + {\bf A}_i\epsilon \in {\bb {DC}}^{m\times n}\) with $ m\ge n $. Assume that ${\bf A}_s = \mathbf{U}_s {\bf \Sigma}_s \mathbf{V}_s^*$ is a compact SVD of ${\bf A}_s$. Then the CCDSVD of $\bf A$ exists if and only if
    \begin{equation*}
        (\mathbf{I}_m - \mathbf{U}_s \mathbf{U}_s^*) \mathbf{A}_i (\mathbf{I}_n - \mathbf{V}_s \mathbf{V}_s^*) = \mathbf{O}_{m \times n}.
    \end{equation*}
    Furthermore, if ${\bf A}$ has a CCDSVD, then there exists a particular pair $({\bf U}_s, {\bf V}_s)$ such that ${\bf A} = {\bf U} {\bf \Sigma} {\bf V}^*$, where ${\bf U} = {\bf U}_s + {\bf U}_i\epsilon \in {\bb {DC}}^{m\times r}$ and ${\bf V} = {\bf V}_s + {\bf V}_i\epsilon \in {\bb {DC}}^{n\times r}$ both have unitary columns and ${\bf \Sigma} = {\bf \Sigma}_s \in {\bb {R}}^{r\times r}$ is diagonal positive. Moreover, the expressions of ${\bf U}_i$ and ${\bf V}_i$ are given by
    \begin{align*}
        & {{\bf U}_i} = \left( {{{\bf I}_m} - {{\bf U}_s}{\bf U}_s^*} \right){{\bf A}_i}{{\bf V}_s}{{\bf \Sigma}_s^{ - 1}} + {{\bf U}_s}{\bf P}, \\
        & {{\bf V}_i} ={\bf A}_i^*{{\bf U}_s}{\left( {{\bf \Sigma}_s^{ - 1}} \right)^*} - {{\bf V}_s}{\bf \Sigma}_s^*{{\bf P}^*}{\left( {{\bf \Sigma}_s^{ - 1}} \right)^*},
    \end{align*}
    where ${\bf P} \in {\bb C}^{r\times r}$ is an arbitrary skew-Hermitian matrix.
\end{theorem}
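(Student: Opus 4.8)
The plan is to split the target identity $\mathbf{A} = \mathbf{U}\boldsymbol{\Sigma}\mathbf{V}^*$ into its standard and infinitesimal parts. Writing $\mathbf{U} = \mathbf{U}_s + \mathbf{U}_i\epsilon$, $\mathbf{V} = \mathbf{V}_s + \mathbf{V}_i\epsilon$ and $\boldsymbol{\Sigma} = \boldsymbol{\Sigma}_s$, the standard part of $\mathbf{U}\boldsymbol{\Sigma}\mathbf{V}^*$ is $\mathbf{U}_s\boldsymbol{\Sigma}_s\mathbf{V}_s^*$, which is the assumed compact SVD of $\mathbf{A}_s$, so it matches automatically. All the content lies in the infinitesimal part,
\begin{equation*}
\mathbf{A}_i = \mathbf{U}_i\boldsymbol{\Sigma}_s\mathbf{V}_s^* + \mathbf{U}_s\boldsymbol{\Sigma}_s\mathbf{V}_i^*. \tag{$\star$}
\end{equation*}
In parallel I would translate the requirement that $\mathbf{U}$ and $\mathbf{V}$ have unitary columns into components: since $\mathbf{U}_s,\mathbf{V}_s$ already have orthonormal columns, $\mathbf{U}^*\mathbf{U} = \mathbf{I}_r$ and $\mathbf{V}^*\mathbf{V} = \mathbf{I}_r$ reduce exactly to the conditions that $\mathbf{U}_s^*\mathbf{U}_i$ and $\mathbf{V}_s^*\mathbf{V}_i$ be skew-Hermitian. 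Thus the whole theorem becomes: solve $(\star)$ for $\mathbf{U}_i,\mathbf{V}_i$ under these two skew-Hermitian constraints.

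For necessity, I would left-multiply $(\star)$ by the complementary projector $\mathbf{I}_m - \mathbf{U}_s\mathbf{U}_s^*$ and right-multiply by $\mathbf{I}_n - \mathbf{V}_s\mathbf{V}_s^*$. Using $(\mathbf{I}_m - \mathbf{U}_s\mathbf{U}_s^*)\mathbf{U}_s = \mathbf{O}$ and $\mathbf{V}_s^*(\mathbf{I}_n - \mathbf{V}_s\mathbf{V}_s^*) = \mathbf{O}$, both terms on the right of $(\star)$ are annihilated, leaving precisely $(\mathbf{I}_m - \mathbf{U}_s\mathbf{U}_s^*)\mathbf{A}_i(\mathbf{I}_n - \mathbf{V}_s\mathbf{V}_s^*) = \mathbf{O}$. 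This settles the ``only if'' half directly.

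For sufficiency and the explicit formulas I would decompose each unknown along the relevant singular subspace and its orthogonal complement, writing $\mathbf{U}_i = \mathbf{U}_s\mathbf{P} + (\mathbf{I}_m - \mathbf{U}_s\mathbf{U}_s^*)\mathbf{U}_i$ with $\mathbf{P} = \mathbf{U}_s^*\mathbf{U}_i$, and similarly $\mathbf{V}_i = \mathbf{V}_s\mathbf{Q} + (\mathbf{I}_n - \mathbf{V}_s\mathbf{V}_s^*)\mathbf{V}_i$ with $\mathbf{Q} = \mathbf{V}_s^*\mathbf{V}_i$. Multiplying $(\star)$ by the four combinations of projectors splits it into four blocks. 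The complement--range block (left by $\mathbf{I}_m - \mathbf{U}_s\mathbf{U}_s^*$, right by $\mathbf{V}_s$) determines the orthogonal part of $\mathbf{U}_i$ after cancelling $\boldsymbol{\Sigma}_s$, which is invertible since it is positive diagonal; combined with the in-range part $\mathbf{U}_s\mathbf{P}$ this reproduces the stated formula for $\mathbf{U}_i$. Symmetrically the range--complement block fixes the orthogonal part of $\mathbf{V}_i$. The skew-Hermitian constraints now read simply $\mathbf{U}_s^*\mathbf{U}_i = \mathbf{P}$ and $\mathbf{V}_s^*\mathbf{V}_i = \mathbf{Q}$ skew-Hermitian, and I would close the loop by substituting the two candidate formulas back into $(\star)$: the $\mathbf{P}$-dependent terms $\mathbf{U}_s\mathbf{P}\boldsymbol{\Sigma}_s\mathbf{V}_s^*$ cancel, and the residual simplifies to exactly $(\mathbf{I}_m - \mathbf{U}_s\mathbf{U}_s^*)\mathbf{A}_i(\mathbf{I}_n - \mathbf{V}_s\mathbf{V}_s^*)$, which vanishes under the hypothesis, so $(\star)$ holds.

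The main obstacle is the remaining range--range block $\mathbf{U}_s^*\mathbf{A}_i\mathbf{V}_s = \mathbf{P}\boldsymbol{\Sigma}_s + \boldsymbol{\Sigma}_s\mathbf{Q}^*$, which couples $\mathbf{P}$ and $\mathbf{Q}$ and is where the two skew-Hermitian requirements must be reconciled. Setting $\mathbf{M} = \mathbf{U}_s^*\mathbf{A}_i\mathbf{V}_s$, this relation forces $\mathbf{Q} = \mathbf{M}^*\boldsymbol{\Sigma}_s^{-1} - \boldsymbol{\Sigma}_s\mathbf{P}^*\boldsymbol{\Sigma}_s^{-1}$, which is precisely the in-range part of the claimed $\mathbf{V}_i$; the delicate point is that this $\mathbf{Q}$ be genuinely skew-Hermitian, which amounts to a symmetry of $\mathbf{M}$ on each repeated-singular-value block. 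This is exactly the purpose of the phrase ``there exists a particular pair $(\mathbf{U}_s,\mathbf{V}_s)$'': I would exploit the residual unitary freedom in the compact SVD of $\mathbf{A}_s$ (common phases on simple singular vectors and block rotations on repeated ones) to normalize $\mathbf{M}$ so that the range--range block is solvable with both $\mathbf{P}$ and $\mathbf{Q}$ skew-Hermitian, leaving $\mathbf{P}$ as the free skew-Hermitian parameter. I expect establishing this normalization, and carefully tracking its interaction with degenerate singular values, to be the technically heaviest step, whereas the two direct substitutions and the projector bookkeeping are routine.
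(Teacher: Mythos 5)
The paper never actually proves this theorem --- it is asserted to follow from ``the technique in'' the cited UTV paper, with no argument given --- so your proposal has to be judged on its own merits rather than against a written proof. Up to the point you flag, it is correct: the reduction to the infinitesimal equation $(\star)$ plus the two skew-Hermitian constraints $\mathbf{P}=\mathbf{U}_s^*\mathbf{U}_i$, $\mathbf{Q}=\mathbf{V}_s^*\mathbf{V}_i$ is the right formulation, the ``only if'' half by sandwiching $(\star)$ between the complementary projectors is complete, and the projector block decomposition correctly isolates the range--range equation $\mathbf{M}=\mathbf{P}\boldsymbol{\Sigma}_s+\boldsymbol{\Sigma}_s\mathbf{Q}^*$ with $\mathbf{M}=\mathbf{U}_s^*\mathbf{A}_i\mathbf{V}_s$ as the only nontrivial constraint.

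The genuine gap is the step you defer to the end, and it cannot be filled: no choice of the pair $(\mathbf{U}_s,\mathbf{V}_s)$ makes the range--range block solvable in general. Comparing diagonal entries in $\mathbf{M}=\mathbf{P}\boldsymbol{\Sigma}_s+\boldsymbol{\Sigma}_s\mathbf{Q}^*$ gives $m_{jj}=\sigma_j(p_{jj}-q_{jj})$, which is purely imaginary for skew-Hermitian $\mathbf{P},\mathbf{Q}$; more generally the Hermitian part of every diagonal block of $\mathbf{M}$ belonging to a repeated singular value must vanish. But the residual freedom in the compact SVD is exactly $\mathbf{U}_s\mapsto\mathbf{U}_s\mathbf{W}$, $\mathbf{V}_s\mapsto\mathbf{V}_s\mathbf{W}$ with $\mathbf{W}$ unitary and commuting with $\boldsymbol{\Sigma}_s$, under which $\mathbf{M}\mapsto\mathbf{W}^*\mathbf{M}\mathbf{W}$ is a blockwise unitary similarity: the real trace $\mathrm{Re}\,\mathrm{tr}$ of each such diagonal block is invariant, so a nonzero Hermitian part can never be normalized away. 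Indeed the ``if'' direction of the theorem is false as stated. Take $\mathbf{A}=1+\epsilon\in\mathbb{D}\mathbb{C}^{1\times 1}$: the existence condition holds trivially since $\mathbf{I}_1-\mathbf{U}_s\mathbf{U}_s^*=0$, yet every dual complex number with $u^*u=1$ has the form $e^{i\theta}(1+i\alpha\epsilon)$ with $\theta,\alpha$ real, so for any admissible $u,v$ and real $\sigma>0$,
\begin{equation*}
u\,\sigma\,v^*=\sigma e^{i(\theta_u-\theta_v)}\bigl(1+i(\alpha_u-\alpha_v)\epsilon\bigr),
\end{equation*}
whose infinitesimal part is a purely imaginary multiple of its standard part and hence can never equal $1+\epsilon$. (What the displayed formulas of the theorem actually guarantee under the projector condition is only $\mathbf{A}=\mathbf{U}\boldsymbol{\Sigma}\mathbf{V}^*$ and unitarity of $\mathbf{U}$; the columns of $\mathbf{V}$ fail to be unitary exactly when those Hermitian block parts of $\mathbf{M}$ are nonzero --- a failure the paper's experiments cannot detect, since they only measure reconstruction error.) This obstruction is precisely what the dual part $\boldsymbol{\Sigma}_i$ of the singular value matrix in Wei et al.'s CDSVD is there to absorb, and discarding it is what breaks sufficiency. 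So you located the exact pressure point, but your plan cannot be completed as written; the theorem itself needs an extra hypothesis (vanishing Hermitian part of each repeated-singular-value diagonal block of $\mathbf{U}_s^*\mathbf{A}_i\mathbf{V}_s$) before the ``if'' direction becomes provable.
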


The specific algorithm is summarized in \Cref{alg:dsvd2}. 

\begin{algorithm}[htbp]
\caption{CCDSVD}
\label{alg:dsvd2}
\textbf{Input:} \(\mathbf{A} = \mathbf{A}_s + \mathbf{A}_i \epsilon \in \mathbb{D}\mathbb{C}^{m \times n}\) with \(m \geq n\).

\textbf{Output:} ${\bf U} = {\bf U}_s + {\bf U}_i\epsilon \in {\bb {DC}}^{m\times r}$, ${\bf V} = {\bf V}_s + {\bf V}_i\epsilon \in {\bb {DC}}^{n\times r}$ and ${\bf \Sigma} = {\bf \Sigma}_s \in {\bb {R}}^{r\times r}$.
\begin{algorithmic}[1]
    \State Decompose \(\mathbf{A}_s\) by a compact SVD: \(\mathbf{A}_s = \mathbf{U}_s {\bf \Sigma}_s \mathbf{V}_s^*\), where \(\mathbf{U}_s \in \mathbb{C}^{m \times r}\) and \(\mathbf{V}_s \in \mathbb{C}^{n \times r}\) both have unitary columns, and \({\bf \Sigma}_s  \in {\bb {R}}^{r\times r} \) is diagonal positive.

    \While {the condition \((\mathbf{I}_m - \mathbf{U}_s \mathbf{U}_s^*) \mathbf{A}_i (\mathbf{I}_n - \mathbf{V}_s \mathbf{V}_s^*) = \mathbf{O}_{m \times n}\) is satisfied }
    \State Set ${\bf P}$ to be a skew-Hermitian matrix.
    \State Set ${{\bf U}_i} = \left( {{{\bf I}_m} - {{\bf U}_s}{\bf U}_s^*} \right){{\bf A}_i}{{\bf V}_s}{{\bf \Sigma}_s^{ - 1}} + {{\bf U}_s}{\bf P}$.
    \State Set $ {{\bf V}_i} ={\bf A}_i^*{{\bf U}_s}{\left( {{\bf \Sigma}_s^{ - 1}} \right)^*} - {{\bf V}_s}{\bf \Sigma}_s^*{{\bf P}^*}{\left( {{\bf \Sigma}_s^{ - 1}} \right)^*}$.
    \EndWhile
\end{algorithmic}
\end{algorithm}

\begin{remark}
    As mentioned in \Cref{sec:int}, the key difference between CDSVD and CCDSVD is that ${\bf \Sigma}$ in the latter is a real matrix but not a dual one as in CDSVD. The change reduces the computation of the decomposition to some extent. Another motivation is that the standard part of ${\bf \Sigma}$ in CDSVD carries the main information; see 
    \cite{wei2024singular} for details.
\end{remark}

\section{Randomized CCDSVD}
\label{sec:rand}
 The specific algorithm is provided in \Cref{alg:rdsvd}.

\begin{algorithm}[htbp]
\caption{Randomized CCDSVD (RCCDSVD)}
\label{alg:rdsvd}
\textbf{Input:} ${\bf A} = {\bf A}_s + {\bf A}_i\epsilon \in {\bb {DC}}^{m\times n}$, target rank $r$, oversampling parameter $p$ and power scheme parameter $q$.

\textbf{Output:} ${\bf U} = {\bf U}_s + {\bf U}_i\epsilon \in {\bb {DC}}^{m\times r}$, ${\bf V} = {\bf V}_s + {\bf V}_i\epsilon \in {\bb {DC}}^{n\times r}$ and ${\bf \Sigma} = {\bf \Sigma}_s \in {\bb {R}}^{r\times r}$.
\begin{algorithmic}[1]
\State Set $\ell = r + p$, and draw an $n\times \ell$ random complex matrix ${\bf \Omega}$.
\State Construct ${\bf Y}_0 = {\bf A}{\bf \Omega}$ .
\For{$j = 1,\cdots ,q$}
    \State Form $\widetilde{\bf Y}_j = {\bf A}^*{\bf Y}_{j-1}$ and compute its dual QR factorization $\widetilde{\bf Y}_j = \widetilde{\bf Q}_j\widetilde{\bf R}_j$.
    \State Form ${\bf Y}_j = {\bf A}^*\widetilde{\bf Q}_{j}$ and compute its dual QR factorization $\widetilde{\bf Y}_j = {\bf Q}_j{\bf R}_j$.
\EndFor
    \State  Construct a dual matrix ${\bf Q}$ by  thin dual QR  of ${\bf Y}_q$.
    \State Form ${\bf B} = {\bf Q}^* {\bf A}$.
    \State Compute the CCDSVD of the small dual matrix: ${\bf B} = \overline{\bf U}{\bf \Sigma}{\bf V}^*$.
    \State Set ${\bf U} = {\bf Q}\overline{\bf U}$.
\end{algorithmic}
\end{algorithm}

\begin{remark}
The random matrix ${\bf \Omega}$ in \Cref{alg:rdsvd} is complex but not dual. The main purpose is to reduce the computation and to avoid information redundancy. This is because ${\bf A}{\bf \Omega} = {{\bf A}_s}{{\bf \Omega} _s} + \left( {{{\bf A}_i}{{\bf \Omega} _s} + {{\bf A}_s}{{\bf \Omega}_i}} \right)\epsilon$ if ${\bf \Omega} ={\bf \Omega}_s+{\bf \Omega}_i\in {\bb {DC}}^{n \times \ell}$. Moreover, 
Proposition 4.4 in \cite{wei2024singular} shows that 
the approximation to the standard part of ${\bf A}_s$ plays a key role in  the approximation to the 
original dual matrix ${\bf A}$. This implies that it is reasonable to mainly capture the action of ${\bf A}_s$ when devising  randomized algorithms. 
Besides, numerical experiments in \Cref{sec:exp} also demonstrate that the present setting performs better 
in most cases.
\end{remark}


\section{Error analysis}
\label{sec:err}
We first present the definition of Frobenius norm of dual matrices.
\begin{definition}[\cite{qi2022low}]
    Let ${\bf A} = {\bf A}_s + {\bf A}_i\epsilon \in {\bb {DC}}^{m\times n}$. The Frobenius norm of ${\bf A}$ is defined as  
    \begin{equation*}
        {\left\| {{{\bf A}}} \right\|_F} = \left\{ \begin{array}{l}
        {\left\| {{{\bf A}_s}} \right\|_F} + \frac{{\left\langle {{{\bf A}_s},{{\bf A}_i}} \right\rangle }}{{{{\left\| {{{\bf A}_s}} \right\|}_F}}}\epsilon \qquad {\rm {if}} \;\; {{\bf A}_s} \ne {{\bf O}_{m \times n}},\\
        {\left\| {{{\bf A}_i}} \right\|_F}\epsilon \;\;\;\;\; \quad \qquad \qquad {\rm {otherwise}},
    \end{array} \right.
    \end{equation*}
where the inner product of matrices is over the real number field, that is, $\left\langle {{{\bf A}_s},{{\bf A}_i}} \right\rangle  = {\mathop{\rm {Re}}\nolimits} \left( {{\rm trace}\left( {{\bf A}_s^*{{\bf A}_i}} \right)} \right)$.
\end{definition}

\begin{theorem}[Average Frobenius error]
\label{the:fe}
     The expected approximation error for the approximation ${\widehat {\bf A}}={\bf U}{\bf \Sigma}{\bf V}^* $ via RCCDSVD with $q=0$, $r \ge 2$ and  $p \ge 2$ satisfying $r + p \le {\rm {min}}\left\{ m, n \right\}$ satisfies
     \begin{equation*}
         {\bb E} {\left\| {\widehat {\bf{A}} - {\bf{A}}} \right\|_F} \le {\left( {1 + \frac{r}{{p - 1}}} \right)^{1/2}}{\left( {\sum\nolimits_{j > r} {\sigma _j^2} } \right)^{1/2}} + \sqrt{2\left\langle {{\bf E}_s, {\bf A}_i} \right\rangle \epsilon},
     \end{equation*}
     where ${\bf E}_s = {\bf A}_s - {\bf U}_s{\bf \Sigma}_s{\bf V}_s^*$.
     \end{theorem}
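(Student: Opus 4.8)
The plan is to reduce the dual approximation error to the \emph{classical} randomized range-finder error for the standard part $\mathbf{A}_s$, followed by a controlled infinitesimal correction. First I would unwind the algorithm for $q=0$: the matrix $\mathbf{Q}$ is the thin dual QR factor of $\mathbf{Y}_0=\mathbf{A}\mathbf{\Omega}$, and since (assuming it exists for the compressed matrix) the CCDSVD reproduces $\mathbf{B}=\mathbf{Q}^*\mathbf{A}$ exactly, the output satisfies $\widehat{\mathbf{A}}=\mathbf{U}\mathbf{\Sigma}\mathbf{V}^*=\mathbf{Q}\overline{\mathbf{U}}\mathbf{\Sigma}\mathbf{V}^*=\mathbf{Q}\mathbf{B}=\mathbf{Q}\mathbf{Q}^*\mathbf{A}$. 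Hence
\[
\widehat{\mathbf{A}}-\mathbf{A}=-(\mathbf{I}-\mathbf{Q}\mathbf{Q}^*)\mathbf{A},
\]
and because $\mathbf{Q}$ has dual-unitary columns, $\mathbf{P}:=\mathbf{Q}\mathbf{Q}^*=\mathbf{P}_s+\mathbf{P}_i\epsilon$ is a dual Hermitian projector with $\mathbf{P}_s=\mathbf{Q}_s\mathbf{Q}_s^*$.

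Next I would separate standard and infinitesimal parts. The standard part of the error is $(\mathbf{I}-\mathbf{Q}_s\mathbf{Q}_s^*)\mathbf{A}_s$, which equals $\mathbf{E}_s=\mathbf{A}_s-\mathbf{U}_s\mathbf{\Sigma}_s\mathbf{V}_s^*$ since $\mathbf{U}_s\mathbf{\Sigma}_s\mathbf{V}_s^*=\mathbf{Q}_s\mathbf{B}_s=\mathbf{Q}_s\mathbf{Q}_s^*\mathbf{A}_s$. The key observation is that $\mathbf{\Omega}$ is complex (not dual), so the standard part of $\mathbf{Y}_0$ is exactly $\mathbf{A}_s\mathbf{\Omega}$; consequently $\mathbf{Q}_s$ is precisely the orthonormal basis produced by the classical randomized range finder applied to $\mathbf{A}_s$. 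I may therefore invoke the average Frobenius-error estimate for the classical scheme (\cite{halko2011finding}, Thm.~10.5) to obtain $\mathbb{E}\|\mathbf{E}_s\|_F\le(1+r/(p-1))^{1/2}(\sum_{j>r}\sigma_j^2)^{1/2}$, which is the first term.

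For the infinitesimal part I would work with the squared dual Frobenius norm. For any dual matrix $\mathbf{E}=\mathbf{E}_s+\mathbf{E}_i\epsilon$ one has $\|\mathbf{E}\|_F^2=\|\mathbf{E}_s\|_F^2+2\langle\mathbf{E}_s,\mathbf{E}_i\rangle\epsilon$, and subadditivity of the square root yields
\[
\|\mathbf{E}\|_F\le\|\mathbf{E}_s\|_F+\sqrt{2\langle\mathbf{E}_s,\mathbf{E}_i\rangle\epsilon}.
\]
Expanding $\mathbf{E}_i=(\mathbf{I}-\mathbf{P}_s)\mathbf{A}_i-\mathbf{P}_i\mathbf{A}_s$ and using $\mathbf{E}_s^*\mathbf{P}_s=\mathbf{O}$ together with $(\mathbf{I}-\mathbf{P}_s)^2=\mathbf{I}-\mathbf{P}_s$, I would show $\langle\mathbf{E}_s,\mathbf{E}_i\rangle=\langle\mathbf{E}_s,\mathbf{A}_i\rangle-\langle\mathbf{E}_s,\mathbf{P}_i\mathbf{A}_s\rangle$, so that, once the cross term is accounted for, the infinitesimal contribution is at most $\sqrt{2\langle\mathbf{E}_s,\mathbf{A}_i\rangle\epsilon}$. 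Taking expectations and combining with the standard-part bound then gives the claimed estimate.

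The main obstacle is the infinitesimal cross term $\langle\mathbf{E}_s,\mathbf{P}_i\mathbf{A}_s\rangle=\mathrm{Re}\,\mathrm{trace}(\mathbf{E}_s^*\mathbf{P}_i\mathbf{A}_s)$. Here $\mathbf{P}_i$ is not a free projector but is pinned down by the dual QR of $\mathbf{A}\mathbf{\Omega}$, so I must verify that this term has the right sign (equivalently, that $\langle\mathbf{E}_s,\mathbf{E}_i\rangle\le\langle\mathbf{E}_s,\mathbf{A}_i\rangle$ and that the right-hand quantity is nonnegative) in order to legitimately pass from $\mathbf{E}_i$ to $\mathbf{A}_i$. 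The structural identities $\mathbf{P}_s\mathbf{P}_i\mathbf{P}_s=\mathbf{O}$ and $(\mathbf{I}-\mathbf{P}_s)\mathbf{P}_i(\mathbf{I}-\mathbf{P}_s)=\mathbf{O}$ forced by $\mathbf{P}^2=\mathbf{P}$, the range relation $\mathbf{E}_s\mathbf{\Omega}=\mathbf{O}$, and $\mathbf{E}_s^*\mathbf{P}_i\mathbf{E}_s=\mathbf{O}$ are the levers I would pull to control it. A secondary delicacy is interpreting the inequality in the dual-number ordering, since the bound is itself a formal expression in $\sqrt{\epsilon}$ arising from the subadditivity step, and this must be made consistent with how expectation acts on the standard and infinitesimal components.
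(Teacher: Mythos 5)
You set the problem up exactly as the paper does---$\widehat{\mathbf{A}}=\mathbf{Q}\mathbf{Q}^*\mathbf{A}$ for $q=0$, the splitting $\|\widehat{\mathbf{A}}-\mathbf{A}\|_F^2=\|\mathbf{E}_s\|_F^2+2\langle\mathbf{E}_s,\mathbf{E}_i\rangle\epsilon$, Theorem 10.5 of \cite{halko2011finding} for $\mathbb{E}\|\mathbf{E}_s\|_F$ (legitimate, since $\mathbf{\Omega}$ is complex and hence $\mathbf{Q}_s$ is the classical range-finder basis for $\mathbf{A}_s$), and square-root subadditivity---but you stop at the only step that carries real content: passing from $\langle\mathbf{E}_s,\mathbf{E}_i\rangle$ to $\langle\mathbf{E}_s,\mathbf{A}_i\rangle$. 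Saying you ``must verify that this term has the right sign'' is naming the missing proof, not giving it, and the levers you list cannot supply it. From $\mathbf{P}^2=\mathbf{P}$ one does get $\mathbf{P}_s\mathbf{P}_i\mathbf{P}_s=\mathbf{O}$ and $(\mathbf{I}-\mathbf{P}_s)\mathbf{P}_i(\mathbf{I}-\mathbf{P}_s)=\mathbf{O}$, but these annihilate the wrong blocks: writing $\mathbf{A}_s=\mathbf{P}_s\mathbf{A}_s+\mathbf{E}_s$ and using $\mathbf{E}_s^*\mathbf{P}_i\mathbf{E}_s=\mathbf{O}$ leaves $\langle\mathbf{E}_s,\mathbf{P}_i\mathbf{A}_s\rangle=\mathrm{Re}\,\mathrm{trace}\left(\mathbf{A}_s^*(\mathbf{I}-\mathbf{P}_s)\mathbf{P}_i\mathbf{P}_s\mathbf{A}_s\right)$, and $(\mathbf{I}-\mathbf{P}_s)\mathbf{P}_i\mathbf{P}_s$ is precisely the block the projector identities do not kill. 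Worse, the dual QR relations $\mathbf{A}_s\mathbf{\Omega}=\mathbf{Q}_s\mathbf{R}_s$ and $\mathbf{A}_i\mathbf{\Omega}=\mathbf{Q}_i\mathbf{R}_s+\mathbf{Q}_s\mathbf{R}_i$ give $(\mathbf{I}-\mathbf{P}_s)\mathbf{P}_i\mathbf{P}_s=(\mathbf{I}-\mathbf{P}_s)\mathbf{A}_i\mathbf{\Omega}\mathbf{R}_s^{-1}\mathbf{Q}_s^*$, so the cross term equals $\mathrm{Re}\,\mathrm{trace}\left(\mathbf{E}_s^*\mathbf{A}_i\mathbf{\Omega}\mathbf{R}_s^{-1}\mathbf{Q}_s^*\mathbf{A}_s\right)$: it couples $\mathbf{A}_i$ to the random $\mathbf{\Omega}$ and is sign-indefinite in general, so no purely algebraic sign argument of the kind you sketch can close the gap.

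For comparison, the paper closes this step not by a sign analysis but by importing the identity established in the proof of Proposition 4.4 of \cite{wei2024singular}: the infinitesimal part of the approximant is orthogonal to $\mathbf{E}_s$, i.e., $\langle\mathbf{E}_s,\mathbf{U}_i\mathbf{\Sigma}_s\mathbf{V}_s^*\rangle=\langle\mathbf{E}_s,\mathbf{U}_s\mathbf{\Sigma}_s\mathbf{V}_i^*\rangle=0$, whence $\langle\mathbf{E}_s,\mathbf{E}_i\rangle=\langle\mathbf{E}_s,\mathbf{A}_i\rangle$, after which Cauchy--Schwarz and subadditivity finish the argument. Your instinct that this orthogonality needs re-justification in the randomized setting is actually well founded: in \cite{wei2024singular} it rests on $\mathbf{E}_s^*\mathbf{U}_s=\mathbf{O}$ \emph{and} $\mathbf{E}_s\mathbf{V}_s=\mathbf{O}$, and while the first survives randomization (since $\mathbf{E}_s^*\mathbf{Q}_s=\mathbf{O}$), the second does not, because here $\mathbf{V}_s$ is a right singular basis of $\mathbf{B}_s=\mathbf{Q}_s^*\mathbf{A}_s$ rather than of $\mathbf{A}_s$---this is exactly the surviving term exhibited above. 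But flagging the obstacle is not overcoming it: to turn your outline into a proof you would need either to establish the required orthogonality (or the inequality $\langle\mathbf{E}_s,\mathbf{P}_i\mathbf{A}_s\rangle\ge 0$) for the randomized factors, or to keep the cross term and absorb it explicitly into the bound; your proposal does neither.
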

     \begin{proof} 
         From ${\bf A} - {\bf U}{\bf \Sigma}{\bf V}^* = {\bf E}_s+{\bf E}_i\epsilon$, we have 
         ${\bf E}_i = {\bf A}_i - {\bf U}_s{\bf \Sigma}_s{\bf V}_i^* - {\bf U}_i{\bf \Sigma}_s{\bf V}_s^*$ and
         \begin{equation*}
             {\left\| {\widehat {\bf{A}} - {\bf{A}}} \right\|_F^2} = {\left\| { {\bf E}_s + {\bf E}_i \epsilon} \right\|^2_F} = {\left\| { {\bf E}_s} \right\|^2_F + 2\left\langle {{\bf E}_s, {\bf E}_i} \right\rangle \epsilon},
         \end{equation*}
         which together with \cite[Theorem 10.5]{halko2011finding}, the result in the proof of \cite[Proposition 4.4]{wei2024singular}, and the Cauchy-Schwarz inequality implies the desired result. 
         \end{proof}

To interpret the role of the infinitesimal part in low-rank approximation, Wei et al. \cite{wei2024singular} defined a quasi-metric to measure the distance between two dual matrices.
\begin{definition}[\cite{wei2024singular}]
     Let ${\bf A} = {\bf A}_s + {\bf A}_i\epsilon, {\bf B} = {\bf B}_s + {\bf B}_i\epsilon \in {\bb {DC}}^{m\times n}$. The quasi-metric of the two dual matrices ${\bf A}$ and ${\bf B}$ is defined as   
    \begin{equation*}
         {\left\| {{{\bf A}} - {\bf B}} \right\|_{F*}} = \left\{ \begin{array}{l}
        {\left\| {{{\bf A}_s}-{{\bf B}_s}} \right\|_F} + \frac{{\left\| {{{\bf{A}}_i} - {{\bf B}_i}} \right\|_F^2}}{{2{{\left\| {{{\bf{A}}_s} - {{\bf B}_s}} \right\|}_F}}}\epsilon \qquad {\rm {if}} \;\; {{\bf A}_s} \ne {{{\bf B}_s}},\\
        {\left\| {{{\bf A}_i} - {{\bf B}_i}} \right\|_F}\epsilon \;\;\;\; \quad\quad\quad \qquad \qquad {\rm {otherwise}}.
    \end{array} \right.
    \end{equation*}
\end{definition}
\begin{theorem}[Average quasi-metric error]
\label{the:qme}
    Under the hypotheses of \Cref{the:fe}, 
    we have
   \begin{align*}
    {\bb E} {\left\| {\widehat {\bf{A}} - {\bf{A}}} \right\|_{F*}} &\le  {\left( {1 + \frac{r}{{p - 1}}} \right)^{1/2}}{\left( {\sum\nolimits_{j > r} {\sigma _j^2} } \right)^{1/2}} \\
    &\quad+  \left( \left\| {   \widehat{\overline{{\bf U}}}_s{\bf B}_i\widehat{{\bf V}}_s^* } \right\|^2_F + \left\| {\bf{Q}}_i^* \right\|_F^2 \left\| {{\bf{E}}_s} \right\|_F^2 + \left\| \overline{{\bf Q}_i^*} \right\|_F^2 \left\| {{\bf{E}}_s} \right\|_F^2 \right)^{1/2} \sqrt{\epsilon},
    \end{align*}
    where $\widehat{\overline{{\bf U}}}_s$  and $ \widehat{{\bf V}}_s^*$ are such that $\left[ {\overline{{\bf U}}}_s, \widehat{\overline{{\bf U}}}_s\right] \in {\bb C}^{m\times m}$ and $\left[ {\bf V}_s^*, \widehat{{\bf V}}_s^* \right] \in {\bb C}^{n\times n}$ are unitary and $\overline {{\bf{Q}}_i}$ is such that $\overline {{\bf{Q}}_i^*}{{\bf{U}}_s} + \overline {{\bf{Q}}_s^*} {{\bf{U}}_i} = {\bf{O}}$ with ${\overline{{\bf Q}_s}}$ being such that $\left[ {{\bf Q}_s}, {\overline{{\bf Q}_s}} \right] \in {\bb {C}}^{m\times m}$ is unitary.
    \end{theorem}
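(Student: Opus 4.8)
The starting point is the algebraic identity, valid for dual numbers, that the square of the quasi-metric is
\[
\left\| \widehat{\bf A} - {\bf A} \right\|_{F*}^2 = \left\| {\bf E}_s \right\|_F^2 + \left\| {\bf E}_i \right\|_F^2\,\epsilon,
\]
where ${\bf E}_s = {\bf A}_s - {\bf U}_s{\bf \Sigma}_s{\bf V}_s^*$ and ${\bf E}_i = {\bf A}_i - {\bf U}_s{\bf \Sigma}_s{\bf V}_i^* - {\bf U}_i{\bf \Sigma}_s{\bf V}_s^*$ are the matrices from the proof of \Cref{the:fe}. Thus the plan is to bound $\|{\bf E}_s\|_F$ and $\|{\bf E}_i\|_F^2$ separately and then use the (formal) sub-additivity $\sqrt{a + b\epsilon} \le \sqrt{a} + \sqrt{b}\,\sqrt{\epsilon}$. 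The standard part is handled exactly as in \Cref{the:fe}: since ${\bf E}_s = ({\bf I}_m - {\bf Q}_s{\bf Q}_s^*){\bf A}_s$ depends only on the standard sketch, \cite[Theorem 10.5]{halko2011finding} gives ${\bb E}\|{\bf E}_s\|_F \le (1 + \frac{r}{p-1})^{1/2}(\sum_{j>r}\sigma_j^2)^{1/2}$, the first summand. Everything else is the bound $\|{\bf E}_i\|_F^2 \le \|\widehat{\overline{\bf U}}_s{\bf B}_i\widehat{\bf V}_s^*\|_F^2 + \|{\bf Q}_i^*\|_F^2\|{\bf E}_s\|_F^2 + \|\overline{{\bf Q}_i^*}\|_F^2\|{\bf E}_s\|_F^2$.

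To prove this I would first rewrite the output of \Cref{alg:rdsvd} as $\widehat{\bf A} = {\bf Q}\widehat{\bf B}$, with $\widehat{\bf B} = \overline{\bf U}{\bf \Sigma}{\bf V}^*$ the CCDSVD reconstruction of the sketch ${\bf B} = {\bf Q}^*{\bf A}$, so that ${\bf E}_i = {\bf A}_i - {\bf Q}_s\widehat{\bf B}_i - {\bf Q}_i{\bf B}_s$. Then I would split the column space with the unitary completion $[{\bf Q}_s, \overline{{\bf Q}_s}]$, giving the orthogonal decomposition $\|{\bf E}_i\|_F^2 = \|{\bf Q}_s^*{\bf E}_i\|_F^2 + \|\overline{{\bf Q}_s}^*{\bf E}_i\|_F^2$. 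Using ${\bf B}_i = {\bf Q}_s^*{\bf A}_i + {\bf Q}_i^*{\bf A}_s$, ${\bf B}_s = {\bf Q}_s^*{\bf A}_s$ and the dual-orthonormality relation ${\bf Q}_s^*{\bf Q}_i = -{\bf Q}_i^*{\bf Q}_s$ coming from ${\bf Q}^*{\bf Q} = {\bf I}$, the first block collapses to ${\bf Q}_s^*{\bf E}_i = ({\bf B}_i - \widehat{\bf B}_i) - {\bf Q}_i^*{\bf E}_s$.

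The two pieces of this block are exactly the first two terms of the bound. Indeed, substituting the CCDSVD formulas for $\overline{\bf U}_i, {\bf V}_i$ (\Cref{alg:dsvd2}), in which the free skew-Hermitian parameter ${\bf P}$ cancels, yields ${\bf B}_i - \widehat{\bf B}_i = ({\bf I} - \overline{\bf U}_s\overline{\bf U}_s^*){\bf B}_i({\bf I} - {\bf V}_s{\bf V}_s^*)$, whose Frobenius norm is $\|\widehat{\overline{\bf U}}_s{\bf B}_i\widehat{\bf V}_s^*\|_F$ after inserting the completions $[\overline{\bf U}_s,\widehat{\overline{\bf U}}_s]$ and $[{\bf V}_s,\widehat{\bf V}_s]$; and $\|{\bf Q}_i^*{\bf E}_s\|_F \le \|{\bf Q}_i^*\|_F\|{\bf E}_s\|_2 \le \|{\bf Q}_i^*\|_F\|{\bf E}_s\|_F$. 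For the second block I would use $\overline{{\bf Q}_s}^*{\bf Q}_s = {\bf O}$ together with the defining relation $\overline{{\bf Q}_i^*}{\bf Q}_s + \overline{{\bf Q}_s^*}{\bf Q}_i = {\bf O}$ of the dual complement $\overline{\bf Q}$ to reduce $\overline{{\bf Q}_s}^*{\bf E}_i$ essentially to $-\overline{{\bf Q}_i^*}{\bf E}_s$, whence $\|\overline{{\bf Q}_s}^*{\bf E}_i\|_F \le \|\overline{{\bf Q}_i^*}\|_F\|{\bf E}_s\|_F$, the third term. Assembling the three orthogonal contributions, inserting into the squared quasi-metric, taking expectations, and applying the square-root splitting then gives the claim.

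The main obstacle I anticipate is the second block $\overline{{\bf Q}_s}^*{\bf E}_i$. Its literal expansion is $\overline{{\bf Q}_s}^*{\bf A}_i + \overline{{\bf Q}_i^*}{\bf A}_s - \overline{{\bf Q}_i^*}{\bf E}_s$, and the first two pieces are precisely the component of ${\bf A}_i$ that the standard sketch ${\bf Q}_s$ does not see; showing that this out-of-range infinitesimal contribution collapses into the single clean term $\overline{{\bf Q}_i^*}{\bf E}_s$ is the crux of the argument and is exactly what forces the introduction of the dual complement $\overline{\bf Q}$ and its defining relation. A secondary point requiring care is the verification that the three resulting pieces are mutually orthogonal (through the nested complements $\overline{{\bf Q}_s}$, $\widehat{\overline{\bf U}}_s$ and $\widehat{\bf V}_s$), so that the bound is a genuine sum of squares rather than a squared sum, together with the bookkeeping of the expectation over ${\bf \Omega}$ alongside the Cauchy--Schwarz step already used in \Cref{the:fe}.
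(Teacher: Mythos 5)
Your proposal follows essentially the same route as the paper's proof: the same identity $\left\| \widehat{\bf A} - {\bf A} \right\|_{F*}^2 = \left\| {\bf E}_s \right\|_F^2 + \left\| {\bf E}_i \right\|_F^2 \epsilon$, the same appeal to \cite[Theorem 10.5]{halko2011finding} and Cauchy--Schwarz for the standard part, and the same orthogonal splitting of ${\bf E}_i$ through the unitary completion $\left[ {\bf Q}_s, \overline{{\bf Q}_s} \right]$, with the ${\bf Q}_s$-block reducing to $\left( {\bf B}_i - \overline{{\bf U}}_i{\bf \Sigma}_s{\bf V}_s^* - \overline{{\bf U}}_s{\bf \Sigma}_s{\bf V}_i^* \right) - {\bf Q}_i^*{\bf E}_s$ (where the paper identifies the first piece with $\left\| \widehat{\overline{{\bf U}}}_s {\bf B}_i \widehat{{\bf V}}_s^* \right\|_F$ by citing \cite[Theorem 4.6]{wei2024singular} rather than re-deriving it from the CCDSVD formulas as you do) and the complementary block collapsing to $-\overline{{\bf Q}_i^*}{\bf E}_s$ via the defining relation of $\overline{{\bf Q}_i}$. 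The two points you flag as the main obstacles --- justifying that collapse (the paper handles it by asserting $\overline{{\bf Q}_i^*}{\bf A}_s + \overline{{\bf Q}_s^*}{\bf A}_i = {\bf O}$) and controlling the cross terms when the squared pieces are summed --- are precisely the steps the paper itself treats tersely, so your sketch reproduces the paper's argument essentially step for step.
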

    \begin{proof}
        From the definition of quasi-metric, we have 
        $ {\left\| {\widehat {\bf{A}} - {\bf{A}}} \right\|_{F*}^2} =  \left\| { {\bf E}_s} \right\|^2_F + \left\| {  {\bf E}_i } \right\|^2_F \epsilon$. Hence, 
        we only need to obtain the bound for $\left\| {  {\bf E}_i } \right\|^2_F$. 
        
By the property of Frobenius norm, we have 
         \begin{align*}
             \left\| {  {\bf E}_i } \right\|^2_F  
             &= \left\|  \left( \begin{array}{l}
{\bf{Q}}_s^*\\
\overline {{\bf{Q}}_s^*} 
\end{array} \right) \left( {{\bf{A}}_i} - {{\bf{U}}_i}{{\bf{\Sigma }}_s}{\bf{V}}_s^* - {{\bf{U}}_s}{{\bf{\Sigma }}_s}{\bf{V}}_i^* \right) \right\|^2_F \\
&= \left\| {\begin{array}{*{20}{l}}
{{\bf{Q}}_s^*{{\bf{A}}_i} - {\bf{Q}}_s^*{{\bf{U}}_i}{{\bf{\Sigma }}_s}{\bf{V}}_s^* - {\bf{Q}}_s^*{{\bf{U}}_s}{{\bf{\Sigma }}_s}{\bf{V}}_i^*}\\
{\overline {{\bf{Q}}_s^*} {{\bf{A}}_i} - \overline {{\bf{Q}}_s^*} {{\bf{U}}_i}{{\bf{\Sigma }}_s}{\bf{V}}_s^* - \overline {{\bf{Q}}_s^*} {{\bf{U}}_s}{{\bf{\Sigma }}_s}{\bf{V}}_i^*}
\end{array}} \right\|_F^2 \\
&= \left\| {{\bf{Q}}_s^*{{\bf{A}}_i} - {\bf{Q}}_s^*{{\bf{U}}_i}{{\bf{\Sigma }}_s}{\bf{V}}_s^* - {\bf{Q}}_s^*{{\bf{U}}_s}{{\bf{\Sigma }}_s}{\bf{V}}_i^*} \right\|_F^2 \\
&\quad+ \left\| {\overline {{\bf{Q}}_s^*} {{\bf{A}}_i} - \overline {{\bf{Q}}_s^*} {{\bf{U}}_i}{{\bf{\Sigma }}_s}{\bf{V}}_s^* - \overline {{\bf{Q}}_s^*} {{\bf{U}}_s}{{\bf{\Sigma }}_s}{\bf{V}}_i^*} \right\|_F^2.
         \end{align*}
From the operations in \Cref{alg:rdsvd}, it is seen that  $ \overline {\bf{U}} _s={\bf{Q}}_s^*{{\bf{U}}_s}$, ${\bf{Q}}_i^*{{\bf{U}}_s} + {\bf{Q}}_s^*{{\bf{U}}_i} = {\overline {\bf{U}} _i}$ and ${\bf{Q}}_i^*{{\bf{A}}_s} + {\bf{Q}}_s^*{{\bf{A}}_i} = {{\bf{B}}_i}$. 
Further, by $\overline {{\bf{Q}}_s^*} {{\bf{U}}_s}{{\bf \Sigma}_s}{\bf V}_i^* = {\bf O}$ due to $ \overline {{\bf{Q}}_s^*} {{\bf{U}}_s}= {\bf O}$, we can check that
         \begin{equation*}
              {\bf O} = \overline {{\bf{Q}}_i^*} {{\bf{U}}_s} + \overline {{\bf{Q}}_s^*} {{\bf{U}}_i} = \overline {{\bf{Q}}_i^*} {{\bf{U}}_s}{{\bf \Sigma}_s}{\bf V}_s^* + \overline {{\bf{Q}}_s^*} {{\bf{U}}_i}{{\bf \Sigma}_s}{\bf V}_s^* + \overline {{\bf{Q}}_s^*} {{\bf{U}}_s}{{\bf \Sigma}_s}{\bf V}_i^* = \overline {{\bf{Q}}_i^*} {{\bf{A}}_s} + \overline {{\bf{Q}}_s^*} {{\bf{A}}_i}.
         \end{equation*}
         Thus,
         \begin{align*}
             \left\| {  {\bf E}_i } \right\|^2_F &
             = \left\| {{{\bf B}_i} - {\bf{Q}}_i^*{{\bf{A}}_s} - {{\overline {\bf{U}} }_i}{{\bf \Sigma}_s {\bf V}}_s^* + {\bf{Q}}_i^*{{\bf{U}}_s}{{\bf \Sigma}_s {\bf V}}_s^*  - {{\overline {\bf{U}} }_s}{{\bf \Sigma}_s {\bf V}}_i^*} \right\|_F^2 \\
             &\quad+  \left\| 
             {  { - \overline {{\bf{Q}}_i^*} {{\bf{A}}_s} + \overline {{\bf{Q}}_i^*} {{\bf{U}}_s}{{\bf{\Sigma }}_s}{\bf{V}}_s^*}   } 
             \right\|_F^2\\
             &\le  \left\| {  {\bf B}_i -  
             \overline{{\bf U}}_i{\bf \Sigma}_s{\bf V}_s^* - \overline{{\bf U}}_s{\bf \Sigma}_s{\bf V}_i^* } \right\|^2_F + 
             \left\| {{\bf{Q}}_i^*{{\bf{U}}_s}{{\bf \Sigma}_s {\bf V}}_s^* 
             - {\bf{Q}}_i^*{{\bf{A}}_s}} \right\|_F^2 \\
             &\quad+ 
             \left\| { \overline{{\bf Q}_i^*}{{\bf{U}}_s}{{\bf \Sigma}_s {\bf V}}_s^* - \overline{{\bf Q}_i^*}{{\bf{A}}_s} } \right\|_F^2.
         \end{align*}
         Considering that ${ {\overline{{\bf U}}_s^*}}{\overline{{\bf U}}_i} + { {\overline{{\bf U}}_i^*}}{\widehat {\overline{{\bf U}}}_s} = {{\bf O}_r}$ and ${{{{\bf V}_s^*}}}{{\bf V}_i} + {{{{\bf V}_i^*}}}{\widehat {\bf V}_s} = {{\bf O}_r}$, similar to \cite[Theorem 4.6]{wei2024singular}, we have $\left\| {  {\bf B}_i - \overline{{\bf U}}_i{\bf \Sigma}_s{\bf V}_s^* - \overline{{\bf U}}_s{\bf \Sigma}_s{\bf V}_i^* } \right\|^2_F  = \left\| {   \widehat{\overline{{\bf U}}}_s{\bf B}_i\widehat{{\bf V}}_s^* } \right\|^2_F$. Hence,
         \begin{align*}
             \left\| {  {\bf E}_i } \right\|^2_F  
             \le \left\| {   \widehat{\overline{{\bf U}}}_s{\bf B}_i\widehat{{\bf V}}_s^* } \right\|^2_F + \left\| {{\bf Q}}_i^* \right\|_F^2 \left\| {{\bf{E}}_s} \right\|_F^2 + \left\| \overline{{\bf Q}_i^*} \right\|_F^2 \left\| {{\bf{E}}_s} \right\|_F^2,
         \end{align*}
         which together with \cite[Theorem 10.5]{halko2011finding} and the Cauchy-Schwarz inequality implies the desired result. 
         \end{proof}


\section{Numerical experiments}
\label{sec:exp}
Two experiments are provided to evaluate the performance of the proposed algorithms. 
The main baseline is 
CCDSVD. We record the running time and relative errors, 
where the latter are defined by
\begin{align*}
    &\frac{{{{\left\| {{{\bf A}_s} - {{\bf U}_s}{{\bf \Sigma}_s}{\bf V}_s^*} \right\|}_F}}}{{{{\left\| {{{\bf A}_s}} \right\|}_F}}}, \quad\quad\quad\quad\quad\quad\;\; {\rm for\; the\; standard\; part}, \\
    &\frac{{{{\left\| {{{\bf{A}}_i} - {{\bf{U}}_i}{{\bf \Sigma}_s {\bf V}}_s^* - {{\bf{U}}_s}{{\bf \Sigma}_s {\bf V}}_i^*} \right\|}_F}}}{{{{\left\| {{{\bf{A}}_i}} \right\|}_F}}}, \quad\quad {\rm for\; the\; infinitesimal\; part} .\\
    &\frac{{{{\left\| {{{\bf{A}}_i} - {{\bf{U}}_i}{{\bf \Sigma}_s {\bf V}}_s^*- {{\bf{U}}_s}{{\bf \Sigma}_i {\bf V}}_s^* - {{\bf{U}}_s}{{\bf \Sigma}_s {\bf V}}_i^*} \right\|}_F}}}{{{{\left\| {{{\bf{A}}_i}} \right\|}_F}}}, \quad\quad {\rm for\; the\; infinitesimal\; part\; of \;CDSVD} .
\end{align*}
All experiments are performed 20 runs by using Matlab
2022a on a computer equipped with an Intel Core i9-12900KF 3.20GHz CPU and 128 GB memory.

In the first experiment, we consider dual matrices ${\bf A} = {\bf A}_s + {\bf A}_i\epsilon $ of size $2m\times m$, where the rank-$r$ matrices ${\bf A}_s $ and ${\bf A}_i $ are formed by multiplying ${\bf B}$ of size $2m\times r$ and ${\bf C}$ of size $r\times m$ generated randomly by the Matlab function $\textsc{randn}(\cdot)$.  The rank $r$, the oversampling parameter $p$, and the power scheme parameter $q$ are set to $m/5$, $10$, and $1$, respectively. We implement  CDSVD, CCDSVD, RCCDSVD and RCCDSVD with the random dual matrix ${\bf \Omega}$ for $m = 2500, 5000, 7500$, and $ 10000$. Note that the aforementioned ${\bf \Omega}$ will be dual real if  $B$ and $C$ are real, and  dual complex if $B$ and $C$ are complex. 

\Cref{table} shows the numerical results for this experiment, 
where RE1 and RE2 represent the relative errors for the standard part and 
the infinitesimal part, respectively. It can be seen that CCDSVD and CDSVD have similar relative errors, but the running time of the former is a little shorter than that of the latter. 
The randomized algorithms effectively reduce the computational cost with the accuracy being similar. As for the two randomized algorithms, the one with random dual matrix performs a little worse in terms of errors and running time. 

\begin{table}[htbp]
\tiny
    \centering
    \caption{Numerical results for real and complex matrices for different $m$. RCCDSVD2 denotes RCCDSVD with random dual matrix.}
    \begin{tabularx}{\textwidth}{@{}l*{4}{c}*{4}{c}@{}}
        \toprule
        & \multicolumn{4}{c}{\textbf{Real matrices}} & \multicolumn{4}{c}{\textbf{Complex matrices}} \\
        \cmidrule(lr){2-5} \cmidrule(lr){6-9}
        Method & CDSVD & CCDSVD & RCCDSVD & RCCDSVD2 & CDSVD & CCDSVD & RCCDSVD & RCCDSVD2 \\
        \midrule
        $m = 2500$ & & & & & & & & \\
        RE1 & 3.41E-15 & 3.41E-15 & 6.82E-14 & 3.57E-13 & 4.95E-15 & 4.95E-15 & 2.31E-14 & 3.67E-14 \\
        RE2 & 1.97E-14 & 2.12E-14 & 3.70E-13 & 2.18E-12 & 1.38E-14 & 3.02E-14 & 1.21E-13 & 2.67E-13 \\
        Time & 2.49 & 2.30 & 0.68 & 0.73 & 9.72 & 9.05 & 1.84 & 2.00 \\
        \midrule
        $m = 5000$ & & & & & & & & \\
        RE1 & 5.10E-15 & 5.10E-15 & 3.91E-14 & 1.27E-13 & 7.10E-15 & 7.10E-15 & 3.18E-14 & 5.56E-14 \\
        RE2 & 2.85E-14 & 3.07E-14 & 3.07E-13 & 1.33E-12 & 2.07E-14 & 4.35E-14 & 2.29E-13 & 5.73E-13 \\
        Time & 20.35 & 18.62 & 4.14 & 4.45 & 91.82 & 86.62 & 13.40 & 14.44 \\
        \midrule
        $m = 7500$ & & & & & & & & \\
        RE1 & 5.95E-15 & 5.95E-15 & 6.35E-14 & 5.46E-13 & 8.93E-15 & 8.93E-15 & 4.41E-14 & 5.24E-14 \\
        RE2 & 3.23E-14 & 3.77E-14 & 6.70E-13 & 8.93E-12 & 2.70E-14 & 5.35E-14 & 3.68E-13 & 6.81E-13 \\
        Time & 62.99 & 57.94 & 13.11 & 14.09 & 318.83 & 301.48 & 43.53 & 46.32 \\
        \midrule
        $m = 10000$ & & & & & & & & \\
        RE1 & 6.69E-15 & 6.69E-15 & 8.02E-14 & 8.11E-14 & 1.02E-14 & 1.02E-14 & 5.17E-14 & 7.23E-14 \\
        RE2 & 3.78E-14 & 4.38E-14 & 7.76E-13 & 1.17E-12 & 3.10E-14 & 6.21E-14 & 5.53E-13 & 9.71E-13 \\
        Time & 146.97 & 135.51 & 29.16 & 30.75 & 740.95 & 701.63 & 94.26 & 100.27 \\
        \bottomrule
    \end{tabularx}\label{table}
\end{table}

In the second experiment, we test the effect of 
values of $q$ 
on the approximation errors. 
Two standard images Lena and Pepper with $512 \times 512$ pixels are first set as 
$\overline{\bf A}_s \in {\bb {R}}^{512\times 512}$ and $\overline{\bf A}_i \in {\bb {R}}^{512\times 512}$, respectively. Then, we generate complex matrices through 2-dimensional discrete Fourier transform, which results in 
the test matrix ${\bf A} = {\bf A}_s + {\bf A}_i\epsilon \in {\bb {DC}}^{512\times 512}$. 
\Cref{fig:power} depicts the approximation errors for the standard part and the infinitesimal part for $r$ ranging from 5 to 200 and $p = 4$. We can see that the power method can indeed improve the accuracy of the algorithm. However, the improvement becomes unremarkable as $q$ increases.

\begin{figure*}[htbp] 
\centering 
\subfloat[Lena, the standard part]{\begin{minipage}[b]{0.5\textwidth}
    \includegraphics[width=1\textwidth]{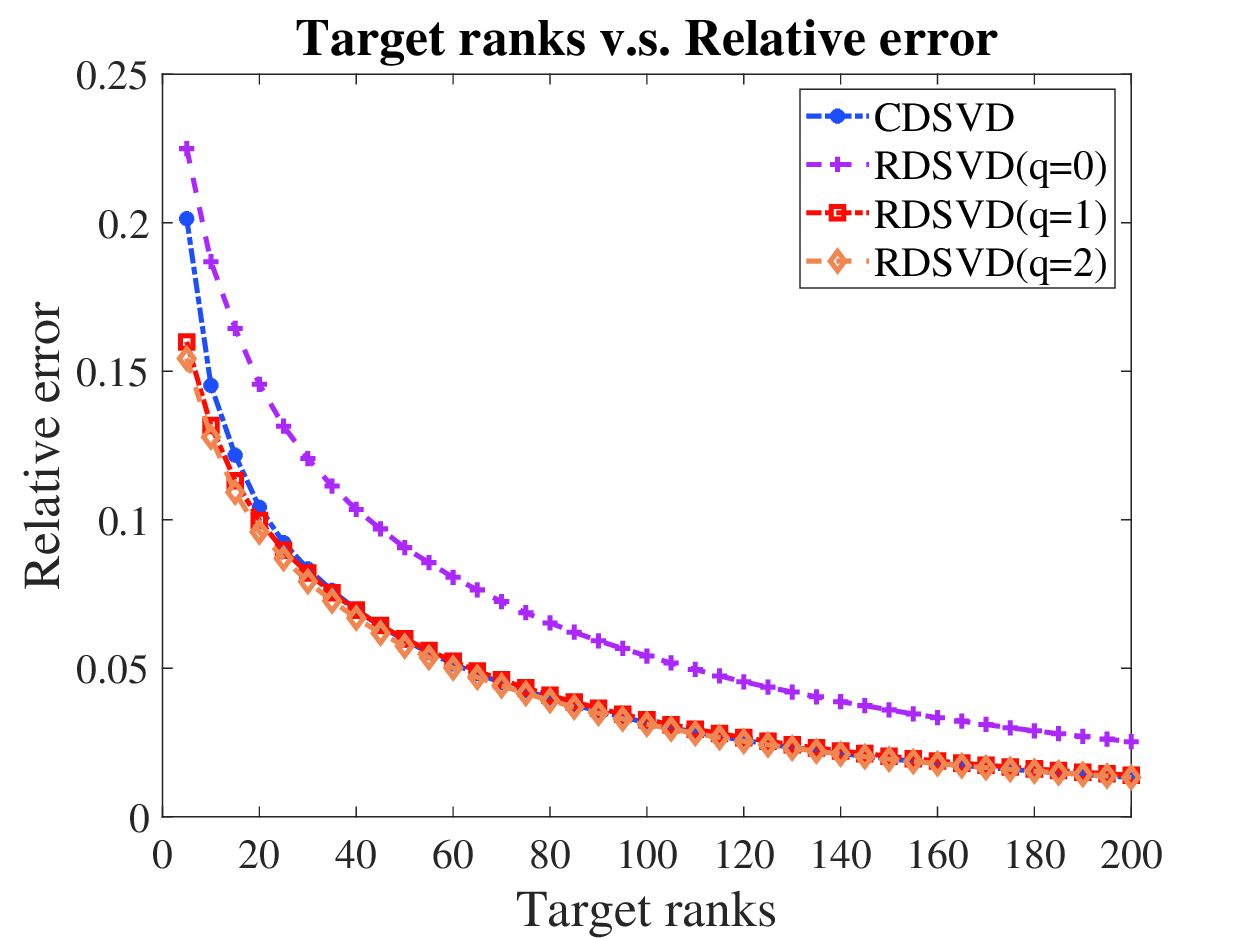} 
\end{minipage}}
\subfloat[Pepper, the infinitesimal part]{\begin{minipage}[b]{0.5\textwidth}
    \includegraphics[width=1\textwidth]{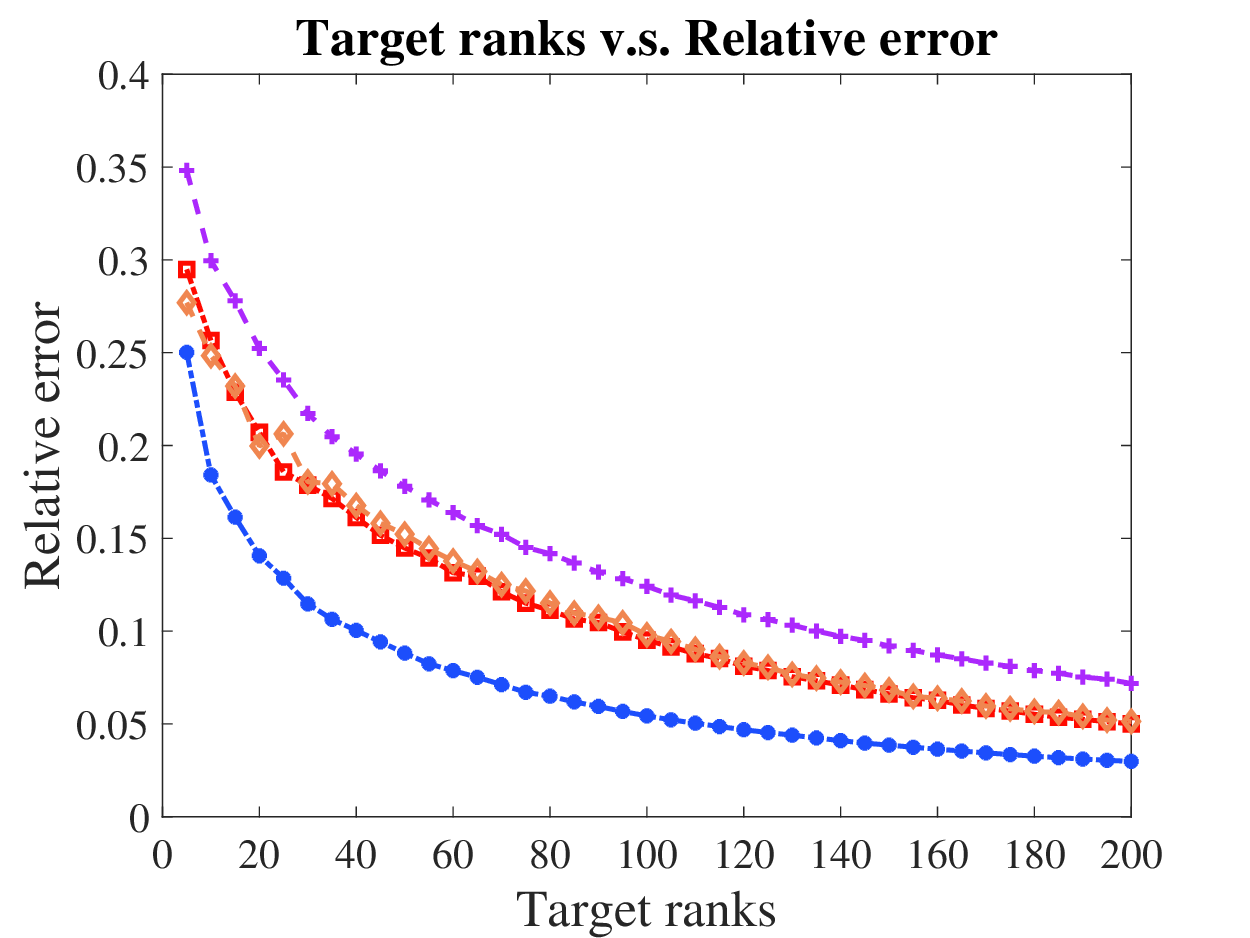} 
\end{minipage}}
 \caption{Errors incurred for different power schemes.} 
 \label{fig:power} 
\end{figure*}

\section{Concluding remarks}
\label{sec:con}
In this paper, we first simplify  CDSVD and then propose its randomized version. 
Further, the expected approximation errors of the randomized algorithm under Frobenius norm and a quasi-metric are provided. Numerical experiments demonstrate the effectiveness of our CCDSVD and the speedup of the randomized algorithm. 

Currently, 
dual quaternions are becoming increasingly popular \cite{qi2022dual,ling2022singular}. Future work can explore randomized algorithms for dual quaternions problems. 

\bibliographystyle{siamplain}
\bibliography{refffff}

\begin{thebibliography}{10}

\bibitem{angeles1998application}
{\sc J.~Angeles}, {\em The application of dual algebra to kinematic analysis}, Computational Methods in Mechanical Systems: Mechanism Analysis, Synthesis, and Optimization,  (1998), pp.~3--32.

\bibitem{clifford1871preliminary}
{\sc Clifford}, {\em Preliminary sketch of biquaternions}, Proceedings of the London Mathematical Society, 1 (1871), pp.~381--395.

\bibitem{fischer1998dual}
{\sc I.~Fischer}, {\em Dual-Number Methods in Kinematics, Statics and Dynamics}, CRC Press, 1998.

\bibitem{gu2015subspace}
{\sc M.~Gu}, {\em Subspace iteration randomization and singular value problems}, SIAM Journal on Scientific Computing, 37 (2015), pp.~A1139--A1173.

\bibitem{gu1987dual}
{\sc Y.-L. Gu and J.~Luh}, {\em Dual-number transformation and its applications to robotics}, IEEE Journal on Robotics and Automation, 3 (1987), pp.~615--623.

\bibitem{gutin2022generalizations}
{\sc R.~Gutin}, {\em Generalizations of singular value decomposition to dual-numbered matrices}, Linear and Multilinear Algebra, 70 (2022), pp.~5107--5114.

\bibitem{halko2011finding}
{\sc N.~Halko, P.-G. Martinsson, and J.~A. Tropp}, {\em Finding structure with randomness: Probabilistic algorithms for constructing approximate matrix decompositions}, SIAM review, 53 (2011), pp.~217--288.

\bibitem{ling2022singular}
{\sc C.~Ling, H.~He, and L.~Qi}, {\em Singular values of dual quaternion matrices and their low-rank approximations}, Numerical Functional Analysis and Optimization, 43 (2022), pp.~1423--1458.

\bibitem{martinsson2016randomized}
{\sc P.-G. Martinsson and S.~Voronin}, {\em A randomized blocked algorithm for efficiently computing rank-revealing factorizations of matrices}, SIAM Journal on Scientific Computing, 38 (2016), pp.~S485--S507.

\bibitem{pennestri2007linear}
{\sc E.~Pennestr{\`\i} and R.~Stefanelli}, {\em Linear algebra and numerical algorithms using dual numbers}, Multibody System Dynamics, 18 (2007), pp.~323--344.

\bibitem{pennestri2018moore}
{\sc E.~Pennestr{\`\i}, P.~Valentini, and D.~De~Falco}, {\em The moore--penrose dual generalized inverse matrix with application to kinematic synthesis of spatial linkages}, Journal of Mechanical Design, 140 (2018), p.~102303.

\bibitem{pennestri2009linear}
{\sc E.~Pennestr{\`\i} and P.~P. Valentini}, {\em Linear dual algebra algorithms and their application to kinematics}, in Multibody Dynamics: Computational Methods and Applications, Springer, 2009, pp.~207--229.

\bibitem{penunuri2019dual}
{\sc F.~Pe{\~n}u{\~n}uri, O.~Carvente, M.~Zambrano-Arjona, R.~Pe{\'o}n, and C.~A. Cruz-Villar}, {\em Dual numbers for algorithmic differentiation}, Ingenier{\'\i}a, 23 (2019), pp.~71--81.

\bibitem{pradeep1989use}
{\sc A.~Pradeep, P.~J. Yoder, and R.~Mukundan}, {\em On the use of dual-matrix exponentials in robotic kinematics}, The International Journal of Robotics Research, 8 (1989), pp.~57--66.

\bibitem{qi2022low}
{\sc L.~Qi, D.~M. Alexander, Z.~Chen, C.~Ling, and Z.~Luo}, {\em Low rank approximation of dual complex matrices}, arXiv preprint arXiv:2201.12781,  (2022).

\bibitem{qi2022dual}
{\sc L.~Qi, C.~Ling, and H.~Yan}, {\em Dual quaternions and dual quaternion vectors}, Communications on Applied Mathematics and Computation, 4 (2022), pp.~1494--1508.

\bibitem{wang2023dual}
{\sc H.~Wang, C.~Cui, and X.~Liu}, {\em Dual r-rank decomposition and its applications}, Computational and Applied Mathematics, 42 (2023), p.~349.

\bibitem{wei2024singular}
{\sc T.~Wei, W.~Ding, and Y.~Wei}, {\em Singular value decomposition of dual matrices and its application to traveling wave identification in the brain}, SIAM Journal on Matrix Analysis and Applications, 45 (2024), pp.~634--660.

\bibitem{xu2024qr}
{\sc R.~Xu, T.~Wei, Y.~Wei, and P.~Xie}, {\em {QR} decomposition of dual matrices and its application}, Applied Mathematics Letters,  (2024), p.~109144.

\bibitem{xu2024utv}
{\sc R.~Xu, T.~Wei, Y.~Wei, and H.~Yan}, {\em {UTV} decomposition of dual matrices and its applications}, Computational and Applied Mathematics, 43 (2024), p.~41.

\bibitem{yu2018efficient}
{\sc W.~Yu, Y.~Gu, and Y.~Li}, {\em Efficient randomized algorithms for the fixed-precision low-rank matrix approximation}, SIAM Journal on Matrix Analysis and Applications, 39 (2018), pp.~1339--1359.

\end{thebibliography}
\end{sloppypar}
\end{document}